    \newtheorem{thm}{Theorem}                   
    \newtheorem{thm*}{Theorem}
    \newtheorem{lemma*}{Lemma}    
\newcommand*{\brs}[1]{\left(#1\right)}             
\newcommand{\mbR}{\mathbb R}
\newcommand{\mbC}{\mathbb C}
\newcommand{\Tr}{\operatorname{Tr}}
\newcommand{\rank}{\operatorname{rank}}
\begin{document}
\title{On a property of Herglotz functions}
\author{Nurulla Azamov}
\address{Independent scholar, Adelaide, SA, Australia}
\email{azamovnurulla@gmail.com}
 \keywords{Herglotz function, singular spectral shift function}
 \subjclass[2020]{ 
     Primary 30J99;
 }
\begin{abstract} 
In this note I prove the following property of Herglotz functions, which to my knowledge is new:
For a Herglotz function $h(z)$ and a real number $r \in \mathbb R$ define a Herglotz function $g_r(z) = (r - h(z))^{-1}.$ 
Let $\mu_r^{(s)}$ be the singular part of the measure $\mu_r$ which corresponds to $g_r(z)$ via the Herglotz representation theorem.
Then the measure
$
   \int_0^1 \mu_r^{(s)}\,dr
$
is absolutely continuous, its density is integer-valued a.e., and moreover the density takes values $0$ or $1$ a.e. 

\end{abstract}
\maketitle
A Herglotz function, also known as a Nevanlinna, Pick or $R$-function, is a holomorphic map of the upper complex half-plane, $\mbC_+,$  into itself, named after \cite{He}. Herglotz functions and their operator-valued analogues 
are an important tool in spectral and scattering theories, see e.g. \cite{AiWa,AD56,ChP,Do,GTs,SiTrId2,Teschl,YaBook}. 
During a work related to the singular spectral shift function (SSSF) I stumbled upon a property of Herglotz functions which, to the best of my knowledge, is new and seems to be interesting.
Herglotz functions have numerous applications, but here I use SSSF to prove something about Herglotz functions. The proof is short but that is at the expense of using a property of SSSF which has a not-so-short proof and in this regard it would be interesting to find a direct proof. 

For any Herglotz function $h(z)$ there exist a non-negative real number $\alpha,$ a real number $\beta_i$ and a positive Borel measure $\mu$ on $\mbR$
obeying 
  $\int \frac{1}{1+\lambda^2}\,d\mu(\lambda) < \infty,$
such that the \emph{Herglotz representation formula} 
$$
  h(z) = \alpha z + \beta_i + \int_\mbR \brs{\frac{1}{\lambda-z} - \frac {\lambda}{\lambda^2+1}}\,d\mu(\lambda),
$$
holds, see e.g. \cite{AD56}. 
Vice versa, for a non-negative real number $\alpha,$ a real number $\beta_i,$ and a Herglotz measure $\mu$ 
the function~$h(z)$ is Herglotz. So, there is one-one correspondence between Herglotz functions and triples $[\alpha,\beta_i,\mu].$

If $h(z)$ is a Herglotz function, then for any real number $r$ the function
    $ g_r(z) :=  (r-h(z))^{-1}$
is also Herglotz. 
Let $\mu_r$ be the measure which corresponds to $g_r(z),$ and let 
$\mu_r^{(s)}$ be the singular part of $\mu_r.$ Consider the averaged measure:
\begin{equation} \label{F: integral of mu[r,(s)]}
  \int_0^1 \mu_r^{(s)}\,dr.
\end{equation}

\begin{thm} \label{T: averaged sing meas of Herglotz functions}
The averaged measure (\ref{F: integral of mu[r,(s)]})
is absolutely continuous and its density is a function which takes values $0$ or $1$ a.e. 
\end{thm}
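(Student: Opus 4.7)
The plan is to realize the Herglotz family $\{g_r\}_{r\in[0,1]}$ as the spectral measures (at a fixed vector) of a rank-one perturbation family of self-adjoint operators, and then to invoke the integer-valuedness of the singular spectral shift function. First, set $F_0(z) := -1/h(z)$; since $z\mapsto -1/z$ preserves the upper half-plane, $F_0$ is again a Herglotz function. By the Nevanlinna--Herglotz functional model (absorbing any residual linear/constant part into the choice of operator), there is a self-adjoint operator $H_0$ on some Hilbert space $\mathcal{H}$ and a vector $\phi\in\mathcal{H}$ with $F_0(z) = \langle (H_0-z)^{-1}\phi,\phi\rangle$.

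Consider now the rank-one positive perturbation family $H_r := H_0 + r P_\phi$, where $P_\phi := \langle\,\cdot\,,\phi\rangle\phi$. The Krein resolvent formula yields
$$
\langle (H_r-z)^{-1}\phi,\phi\rangle \;=\; \frac{F_0(z)}{1+rF_0(z)} \;=\; \frac{1}{r-h(z)} \;=\; g_r(z),
$$
so $\mu_r$ is exactly the $\phi$-spectral measure of $H_r$. The Birman--Solomyak spectral-averaging identity for this rank-one family then reads
$$
\int_0^1 \mu_r\,dr \;=\; \xi(\lambda;H_1,H_0)\,d\lambda
$$
as measures on $\mathbb{R}$, where $\xi(\,\cdot\,;H_1,H_0)$ is the Krein spectral shift function of $(H_1,H_0)$. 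Since the perturbation is positive of rank one, one has $0 \le \xi(\lambda;H_1,H_0) \le 1$ a.e.

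Separately, by Fubini the averaged absolutely continuous part $\int_0^1 \mu_r^{(ac)}\,dr$ is itself absolutely continuous, with explicit Lebesgue density
$$
\rho^{(ac)}(\lambda) \;=\; \frac{1}{\pi}\int_0^1 \frac{b(\lambda)}{(r-a(\lambda))^2+b(\lambda)^2}\,dr,
$$
where $h(\lambda+i0)=a(\lambda)+ib(\lambda)$ are the Fatou boundary values of $h$. Since $\int_0^1 \mu_r\,dr$ and $\int_0^1 \mu_r^{(ac)}\,dr$ are both absolutely continuous, so is their difference $\int_0^1 \mu_r^{(s)}\,dr$; a short computation (essentially an arctangent identity) identifies $\rho^{(ac)}$ with the absolutely continuous part $\xi^{(ac)}(\lambda;H_1,H_0)$ of the SSF, so that the density of $\int_0^1 \mu_r^{(s)}\,dr$ is precisely the singular part $\xi^{(s)}(\lambda;H_1,H_0)$.

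The conclusion then rests on the author's integer-valuedness theorem for the singular SSF: $\xi^{(s)}(\lambda;H_1,H_0)\in\mathbb{Z}$ a.e. Combined with the bound $\xi\in[0,1]$, this forces $\xi^{(s)}\in\{0,1\}$ a.e., which is the asserted property. This last step is simultaneously the main obstacle of the argument and the ``not-so-short'' ingredient the author alludes to; the Nevanlinna realization, the Krein formula, and the Birman--Solomyak averaging identity are all classical and carry no serious technical difficulty, so the full depth of the proof is carried by the integer-valuedness theorem for the singular spectral shift function.
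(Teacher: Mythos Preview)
Your argument is correct and follows essentially the same route as the paper: realize $-1/h(z)$ as $\Tr(R_z(H_0)V)$ with $V=P_\phi$ rank one, use the Krein formula to identify $g_r$ with $\Tr(R_z(H_r)V)$ for $H_r=H_0+rV$, and then invoke the integer-valuedness of the singular spectral shift function from \cite{Az3v6} together with the rank-one bound $0\le\xi\le1$. The only cosmetic difference is that you pass through the Birman--Solomyak averaging identity and subtract off the absolutely continuous part explicitly, whereas the paper recognizes $\int_0^1\mu_r^{(s)}\,dr$ directly as the singular spectral shift measure by definition; the substantive content is identical.
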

\begin{proof}
It is a well-known property of a Herglotz function $f(z)$ (see e.g. \cite{Do,SiTrId2})
that for some self-adjoint operator~$H_0$ with simple spectrum and a rank 1 
non-negative self-adjoint operator~$V$ we have $f(z) = \Tr(R_z(H_0)V),$ where $R_z(H_0) = (H_0 - z)^{-1}$ is the resolvent of  $H_0.$
Applying this to the Herglotz function $-h(z)^{-1}$
gives $-h(z)^{-1} = \Tr(R_z(H_0)V).$ From this a simple well-known calculation implies 
$$
  (r - h(z))^{-1} = \Tr(R_z(H_r)V),
$$
where $H_r = H_0 + rV.$
It follows that the measure $\mu_r^{(s)}$ is the singular part of the measure 
$\Delta \mapsto \Tr(E_\Delta(H_r)V).$ Thus, the measure
(\ref{F: integral of mu[r,(s)]}) is the singular spectral shift measure 
of the pair $H_0,V.$ Hence, 
by \cite[Theorem 8.2.6]{Az3v6} (see also \cite{AzDaMN} for a shorter proof) it is absolutely continuous and its density is a.e. integer-valued.
Since $V \geq 0$ and $\rank V = 1$ the density takes values $0$ or $1$ a.e. 
\end{proof}

\bigskip 
{\it Acknowledgements.} I thank my wife for financially supporting me during the work on this paper.

\end{document}